\documentclass[english,12pt,leqno]{amsart}

\usepackage{times,fullpage,amsmath, amstext, amsfonts, amssymb, enumerate, enumitem, hyperref}
\usepackage[utf8]{inputenc}
\usepackage{tikz}

\pagestyle{headings}

\setlength{\headheight}{6.15pt}
\setlength{\headsep}{0.5cm}

\newtheorem{theorem}{Theorem}
\newtheorem*{maintheorem*}{Theorem \ref{t:main}}
\newtheorem{lemma}[theorem]{Lemma}
\newtheorem*{lemma*}{Lemma}
\newtheorem{proposition}[theorem]{Proposition}
\newtheorem*{proposition*}{Proposition}

\theoremstyle{definition}
\newtheorem{definition}[theorem]{Definition}

\newtheorem{example}[theorem]{Example}
\newtheorem*{example*}{Example}

\theoremstyle{remark}
\newtheorem{remark}[theorem]{Remark}


\newcommand{\eps}{\varepsilon}

\newcommand{\R}{\mathbb{ R}}

\newcommand{\DD}{\mathcal{ D}}
\newcommand{\EE}{\mathcal{ E}}
\newcommand{\WW}{\mathcal{ W}}

\def\lra{{\longrightarrow}}

\DeclareMathOperator{\Sol}{Sol^4_1}
\DeclareMathOperator{\Nil}{Nil}

\newcommand{\N}{\mathbb{N}}
\newcommand{\PP}{\mathbb{P}}


\title{Engel structures and weakly hyperbolic flows on four-manifolds}

\author{D.~Kotschick}
\address{Mathematisches Institut, {\smaller LMU} M\"unchen,
Theresienstr.~39, 80333~M\"unchen, Germany}
\email{dieter@member.ams.org}
\author{T.~Vogel}
\address{Mathematisches Institut, {\smaller LMU} M\"unchen,
Theresienstr.~39, 80333~M\"unchen, Germany}
\email{tvogel@math.lmu.de}

\subjclass[2010]{primary 37D40, 58A30; secondary 37D30, 53D35}

\date{October 12, 2016, revised June 29, 2017; \copyright{D.~Kotschick and T.~Vogel 2016}}

\thanks{We are grateful to Y.~Eliashberg, Y.~Mitsumatsu and E.~Volkov for discussions -- dating back many years -- about this work.
We would also like to thank the anonymous referee for a very careful reading of our paper.}

\begin{document}

\begin{abstract}
We study pairs of Engel structures on four-manifolds whose intersection has constant rank one and which define the same 
even contact structure, but induce different orientations on it. We establish a correspondence between such pairs of Engel structures 
and a class of weakly hyperbolic flows. This correspondence is analogous to the correspondence between bi-contact structures and 
projectively or conformally Anosov flows on three-manifolds found by Eliashberg--Thurston and by Mitsumatsu. 
\end{abstract}

\maketitle

\section{Introduction}

Engel structures are maximally non-integrable two-plane fields $\DD$ on four-manifolds. 
They admit the local normal form $\ker(dz-ydx)\cap\ker(dy-wdx)$ in terms of coordinates $w,x,y,z$. 
Manifolds with Engel structures are parallelisable, and it is known from work of the second author
that all parallelisable four-manifolds do indeed carry Engel structures~\cite{annals}. Moreover, all homotopy 
classes of parallelisations are induced by Engel structures; see R.~Casals, J.~P\'erez, A.~del Pino and F.~Presas~\cite{CP3}.
This makes it interesting to try to understand the geometry of Engel manifolds, and to attempt to single out geometrically
significant ones.

The fact that Engel structures admit a local normal form is one of many properties they share with contact structures.
Another shared property is the stability under sufficiently small perturbations, i.e.~a $C^2$-small perturbation of an Engel structure 
is again an Engel structure. These similarities between contact 
structures and Engel structures suggest that notions from contact topology might have counterparts in the theory of Engel structures. 

In this direction, in this paper we define bi-Engel structures in analogy with the bi-contact structures studied by 
Y.~Eliashberg and W.~Thurston~\cite{ET} and by Y.~Mitsumatsu~\cite{M}. Among other results, these authors showed that 
bi-contact structures correspond to flows satisfying a weak version of hyperbolicity.  We define another notion of weak 
hyperbolicity which allows us to show how to obtain bi-Engel structures from weakly hyperbolic flows and vice versa. 

In Section~\ref{s:Engel} we recall the definitions and simple properties of Engel structures and of even contact structures and 
we introduce bi-Engel structures. Section~\ref{s:hyper} is devoted to flows which are weakly hyperbolic when restricted to a 
smooth invariant subbundle of the tangent bundle. The definition of weak hyperbolicity and the discussion of its most basic 
properties require no assumption on the dimensions of the manifold or the subbundle.

Section~\ref{s:main} contains a detailed proof of our main result:
\begin{theorem} \label{t:main}
Let $\EE$ be an orientable even contact structure on a closed oriented four-manifold $M$, and $\WW$ its characteristic foliation. 
Then $\WW$ is weakly hyperbolic if and only if $\EE$ is induced by a bi-Engel structure $(\DD_+,\DD_-)$.
\end{theorem}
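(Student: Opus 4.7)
The plan is to translate both directions of the equivalence into statements about the action of a local generator $W$ of $\WW$ on the rank-two bundle $\EE/\WW$, and then to match them via a cone criterion. A preliminary observation: for any Engel structure $\DD$ with $[\DD,\DD]=\EE$, the characteristic foliation $\WW$ lies inside $\DD$. Indeed, the curvature $\omega_\EE\colon\Lambda^2\EE\to TM/\EE$ has rank two and vanishes on $\DD$ (because $[\DD,\DD]\subset\EE$), so $\DD$ is the unique maximal isotropic plane of $\omega_\EE$ in $\EE$ and therefore contains the one-dimensional radical $\WW$. Applied to a bi-Engel pair, and combined with the rank-one hypothesis on the intersection, this gives $\WW=\DD_+\cap\DD_-$, so that $\ell_\pm:=\DD_\pm/\WW$ are two transverse line fields in $\EE/\WW$.

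\textbf{Direction ``bi-Engel $\Rightarrow$ weakly hyperbolic''.} Pick local sections $Y_\pm$ of $\DD_\pm$ with $[Y_\pm]=\ell_\pm$, and write $[W,Y_+]\equiv \gamma Y_-\pmod{\DD_+}$ and $[W,Y_-]\equiv \delta Y_+\pmod{\DD_-}$. The first Engel condition $[\DD_+,\DD_+]=\EE$ is precisely $\gamma\neq 0$ (non-vanishing of the transverse component of $[W,Y_+]$ in $\EE/\DD_+\cong\ell_-$), and similarly $\delta\neq 0$ for $\DD_-$; the remaining condition $[\DD_\pm,\EE]=TM$ is automatic from $\EE$ being even contact, via $\omega_\EE(Y_+,Y_-)\neq 0$. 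A direct computation shows that each Engel structure $\DD$ canonically orients $\EE$ through the curvature pairing $\omega_\DD\colon\Lambda^2\DD\to\EE/\DD$, independently of the auxiliary orientation chosen on $\DD$ itself. Applied to $\DD_\pm$, this translates the opposite-orientation hypothesis into the sign inequality $\gamma\delta>0$, which is exactly the statement that one of the two cones in $\EE/\WW$ bounded by $\ell_\pm$ is strictly attracted by the infinitesimal flow of $W$. This is the cone criterion for weak hyperbolicity of $\WW$ on $\EE$ as defined in Section~\ref{s:hyper}.

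\textbf{Direction ``weakly hyperbolic $\Rightarrow$ bi-Engel''.} Conversely, weak hyperbolicity of $\WW$ on $\EE$ provides a strictly invariant cone in $\EE/\WW$ bounded by smooth transverse line fields $\ell_\pm$. Let $\DD_\pm$ be the rank-two subbundles of $\EE$ with $\WW\subset\DD_\pm$ and $\DD_\pm/\WW=\ell_\pm$; transversality forces $\DD_+\cap\DD_-=\WW$. Strict cone invariance is precisely $\gamma,\delta\neq 0$ with $\gamma\delta>0$, which shows that each $\DD_\pm$ is Engel and, by the orientation computation of the previous direction, that they induce opposite orientations on $\EE$.

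\textbf{Main difficulty.} The technical core is the orientation bookkeeping: verifying that the opposite-orientation hypothesis on $(\DD_+,\DD_-)$ matches the sign condition $\gamma\delta>0$ (an attracting cone), not $\gamma\delta<0$ (a saddle configuration with no invariant cone), and that the induced orientation on $\EE$ from each $\DD_\pm$ is well-defined globally on the closed oriented four-manifold $M$, invoking the orientability of $\EE$. A secondary subtlety is that weak hyperbolicity as formulated in Section~\ref{s:hyper} may yield only a continuous invariant cone field, in which case one has to arrange a \emph{smooth} choice of bounding line fields $\ell_\pm$ so that the resulting $\DD_\pm$ are smooth Engel structures.
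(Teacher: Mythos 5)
Your overall framing---reducing both directions to how the holonomy of $\WW$ rotates the lines $\DD_\pm/\WW$ inside the rank-two bundle $\EE/\WW$, with the opposite-orientation hypothesis decoded as the sign condition $\gamma\delta>0$---is the right picture, and the orientation bookkeeping matches Lemma~\ref{l:orient} and the commutator computations in the paper. The genuine gap is in the direction ``bi-Engel $\Rightarrow$ weakly hyperbolic''. You close that direction by asserting that a strictly attracted infinitesimal cone ``is the cone criterion for weak hyperbolicity as defined in Section~\ref{s:hyper}'', but Definition~\ref{d:hyper} is not a cone criterion: it demands an actual $\varphi_t$-invariant splitting $\EE/\WW=\EE_+\oplus\EE_-$ together with a uniform exponential domination estimate~\eqref{eq:hyper}. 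Passing from ``the two Engel planes rotate towards each other without ever coinciding'' to the existence of such a splitting with a uniform rate is the technical core of this direction, and it is entirely absent from your argument. The paper produces the invariant line fields as the limits $\DD^{\pm\infty}(p)=\lim_{t\to\pm\infty}D\varphi_{-t}\left(\DD_\pm(\varphi_t(p))\right)$, proves that the limits coming from $\DD_+$ and from $\DD_-$ coincide via a cross-ratio estimate (inequality~\eqref{doppelvrel} iterated along a recurrent orbit, with compactness of a local transversal supplying a uniform factor $\alpha>1$), and only then derives the exponential estimate by integrating a differential inequality for $\log(\lambda_+/\lambda_-)$. If you wish to keep the cone-field language, you must either prove this cone-to-splitting implication or cite a precise statement of it; as written, the harder half of the theorem is asserted rather than proved.

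In the converse direction your construction (take the two lines bounding an invariant cone as $\DD_\pm/\WW$) is essentially the paper's choice of the planes spanned by $W$ and $X_+\pm X_-$, but two points you dismiss as subtleties carry real weight. First, the splitting $\EE_\pm$ furnished by weak hyperbolicity is in general only continuous (Lemma~\ref{l:Epm are bundles}), so the naive $\DD_\pm$ need not be differentiable and the Engel condition cannot even be formulated; the paper repairs this by convolving $X_\pm$ along the flow lines of $\WW$ and then approximating by smooth sections $Z_\pm$ whose first and second derivatives \emph{along} $\WW$ are controlled---mere $C^0$-approximation does not preserve the strict rotation condition. Second, the infinitesimal rotation inequality $\lambda'_+(0,p)\ge c+\lambda'_-(0,p)$ is obtained by differentiating~\eqref{eq:hyper} at $t=0$, which is legitimate only after normalizing $K=1$ as in Remark~\ref{r:mit und ohne K}; and one must separately treat the case where the line bundles $\EE_\pm$ are non-orientable, so that $X_\pm$ exist only up to sign. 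None of this invalidates your plan, but these steps are where the actual proof lives.
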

It is clear that with obvious changes of notation our argument also yields the corresponding result whenever 
a one-dimensional foliation $\WW$ is weakly hyperbolic with respect to a rank three subbundle $\EE$, regardless of the dimension
of the ambient manifold. In the case when $\EE$ is the tangent bundle of a three-manifold, one obtains
the correspondence between bi-contact structures and projectively or conformally Anosov
flows discussed in~\cite{ET,M}\footnote{We found the explanations in those references to be somewhat elliptical. Related
arguments also appear in~\cite{CF}.}.

Although bi-contact and bi-Engel structures have very similar definitions and both have relations to flows which are 
weakly hyperbolic in an appropriate sense, there are also important differences. 
As observed first by Mitsumatsu~\cite{M2}, every orientable closed three-manifold has a bi-contact structure. 
More generally, M.~Asaoka, E.~Dufraine and T.~Noda~\cite{ADN} proved that every homotopy class of plane fields 
with trivial Euler class (this is clearly necessary) is realised by bi-contact structures.
For parallelisable four-manifolds we know that Engel structures exist~\cite{annals,CP3}, 
but bi-Engel structures are harder to come by. In contrast to bi-contact structures, the line field of the 
flow associated to a bi-Engel structure is completely determined by one of the two Engel structures, 
in fact by the underlying even contact structure. This makes it difficult to construct examples. 
Nevertheless, in Section~\ref{s:final} we give many examples on mapping tori of contactomorphisms of three-manifolds. 
There are two rather different kinds of examples. The first, which was studied already in~\cite{thurston}, and which was 
one of the motivations for this paper, is the Thurston geometry $\Sol$, including mapping tori of $\Nil^3$-manifolds. 
The second consists of suspensions of contact Anosov flows, which are plentiful
according to the work of P.~Foulon and B.~Hasselblatt~\cite{FH}.

An outstanding problem about Engel structures, again in parallel with three-dimensional contact topology, is whether 
there is a useful notion of tightness for them. While we do not directly address this question here, we will 
in Subsection~\ref{ss:rigid} discuss a remarkable rigidity property of the flow lines of the characteristic foliation of 
certain Engel structures, which follows from work of R.~Bryant and L.~Hsu~\cite{BH}; compare the 
very recent~\cite{PP}. Remarkably, this rigidity property is tautologically satisfied for bi-Engel structures, which may or 
may not provide a useful hint towards isolating non-flexible properties which may distinguish between different
kinds of Engel structures.

\section{Engel and bi-Engel structures}\label{s:Engel}

This section contains the definitions and elementary facts about the distributions appearing in this note. More information about even contact structures can be found for example in~\cite{McD}, while \cite{golubev,montgomery} and \cite{annals} contain background on Engel structures. 

\subsection{Even contact structures}\label{ss:even}

\begin{definition}\label{d:even}
An even contact structure on a $2n$-dimensional manifold $M$ is a maximally non-integrable smooth hyperplane field $\EE$. 
\end{definition}

Such a hyperplane field can be defined locally by a one-form $\alpha$ with the property that $\alpha\wedge (d\alpha)^{n-1}$ is nowhere zero. A global defining form exists if and only if $\EE$ is coorientable. The two-form $d\alpha$ has maximal rank on $\EE$. If one changes the defining form $\alpha$, then the restriction of $d\alpha$ to $\EE$ changes only by multiplication with a function, so its conformal class is intrinsically defined. The kernel of $d\alpha$ restricted to $\EE$  coincides with the kernel of the $(2n-1)$-form $\alpha\wedge (d\alpha)^{n-1}$. This kernel is a line field $\WW\subset\EE$ giving rise to the characteristic foliation of $\EE$, and the quotient bundle $\EE/\WW$ carries a conformal symplectic structure. The form $(d\alpha)^{n-1}$ gives
$\EE/\WW$ an orientation independent of choices precisely when $n$ is odd.

If $W$ is any vector field tangent to $\WW$, then
$$
L_W\alpha = di_W\alpha+i_Wd\alpha = i_Wd\alpha
$$
vanishes on $\EE$, and is therefore a multiple of $\alpha$. Thus any flow tangent to the characteristic foliation $\WW$ preserves $\EE=\ker\alpha$. 

\begin{lemma}\label{l:evenorient}
If $n$ is even, the orientability of $M$ is equivalent to the orientability of $\WW$.
\end{lemma}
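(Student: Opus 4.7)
The plan is to compute the determinant line bundle $\det(TM)$ using the flag $\WW\subset\EE\subset TM$ and to show, when $n$ is even, that $\det(TM)\cong\WW$. Since $M$ is orientable iff $\det(TM)$ is trivial and the line bundle $\WW$ is orientable iff it is trivial, this suffices.

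First I would apply the determinant functor to the two short exact sequences $0\to\WW\to\EE\to\EE/\WW\to 0$ and $0\to\EE\to TM\to TM/\EE\to 0$ of real vector bundles. Identifying the line bundles $\WW$ and $TM/\EE$ with their own determinants, this yields the canonical isomorphism
\[\det(TM)\;\cong\;\WW\otimes\det(\EE/\WW)\otimes(TM/\EE).\]

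Next I would reinterpret the discussion preceding the lemma as saying that $d\alpha$, for a local defining form $\alpha$ of $\EE$, defines a globally well-defined non-degenerate $(TM/\EE)$-valued skew bilinear form $\bar\Omega\colon\EE/\WW\otimes\EE/\WW\to TM/\EE$: replacing $\alpha$ by $f\alpha$ for a nowhere-vanishing function $f$ rescales the local trivialization of $TM/\EE$ and the two-form $d\alpha|_\EE$ by the same factor $f$, so the two rescalings cancel in the definition of $\bar\Omega$. Taking the $(n-1)$-st exterior power of $\bar\Omega$ then produces a canonical isomorphism
\[\det(\EE/\WW)\;\cong\;(TM/\EE)^{\otimes(n-1)}.\]

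The final step is to use that for any real line bundle $L$ the tensor square $L\otimes L$ is canonically trivial (its structure group $\{\pm 1\}$ is killed by the squaring representation), so $L^{\otimes k}$ is trivial when $k$ is even and isomorphic to $L$ when $k$ is odd. When $n$ is even, $n-1$ is odd, hence $(TM/\EE)^{\otimes(n-1)}\cong TM/\EE$, and substituting into the first displayed isomorphism gives
\[\det(TM)\;\cong\;\WW\otimes(TM/\EE)\otimes(TM/\EE)\;\cong\;\WW,\]
which is exactly the claimed equivalence of orientabilities.

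I do not expect any serious obstacle here; the only delicate point is the verification that the rescalings cancel in the definition of $\bar\Omega$. The reason the hypothesis on the parity of $n$ enters is entirely through the previous paragraph: for $n$ odd one instead obtains $\det(TM)\cong\WW\otimes(TM/\EE)$, so that $M$-orientability encodes a combination of coorientability of $\EE$ and orientability of $\WW$, as is consistent with the fact noted earlier that $(d\alpha)^{n-1}$ canonically orients $\EE/\WW$ precisely when $n$ is odd.
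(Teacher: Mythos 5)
Your proof is correct, and it takes a genuinely more algebraic route than the paper's. The paper's proof is a two-line geometric observation: $\EE/\WW$ restricts to a contact structure on each local transversal to $\WW$, a contact structure canonically orients a $(2n-1)$-dimensional manifold exactly when $n$ is even (the volume form $\alpha\wedge(d\alpha)^{n-1}$ scales by $f^{n}$ under $\alpha\mapsto f\alpha$), and the holonomy of $\WW$ preserves this orientation, so orientability of $TM$ reduces to orientability of $\WW$. You instead compute the determinant line via the flag $\WW\subset\EE\subset TM$, obtaining $\det(TM)\cong\WW\otimes\det(\EE/\WW)\otimes(TM/\EE)$, and then trivialize $\det(\EE/\WW)\otimes(TM/\EE)\cong(TM/\EE)^{\otimes n}$ using the canonical $(TM/\EE)$-valued symplectic form on $\EE/\WW$ --- which is just the Frobenius tensor $(X,Y)\mapsto[X,Y]\bmod\EE$, so your cancellation check is indeed unproblematic --- together with the canonical triviality of even tensor powers of a real line bundle. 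Both arguments ultimately hinge on the same parity count $f^{n}$, but yours replaces the appeal to holonomy and transversals by a pointwise line-bundle computation, is entirely self-contained, and as a bonus yields $\det(TM)\cong\WW\otimes(TM/\EE)$ for $n$ odd, consistent with the paper's earlier remark that $(d\alpha)^{n-1}$ orients $\EE/\WW$ canonically precisely when $n$ is odd.
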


\begin{proof}
Note that $\EE/\WW$ defines a contact structure on transversals to $\WW$, and therefore orients the transversals canonically exactly when $n$ is even. The holonomy of $\WW$ preserves this orientation. Therefore $\WW$ is orientable if and only if $TM$ is. 
\end{proof}

We now discuss the condition for the existence of a defining form $\alpha$ for $\EE$ which is preserved by the holonomy of the characteristic foliation.

\begin{lemma}\label{l:volume}
Let $\EE$ be a coorientable even contact structure, with characteristic foliation $\WW$. The following conditions are 
equivalent:
\begin{enumerate}
\item The defining form $\alpha$ for $\EE$ can be  chosen such that $d\alpha$ is of constant rank $2n-2$.
\item The characteristic foliation $\WW$ is the kernel of a closed $(2n-1)$-form.
\item The characteristic foliation $\WW$ has volume-preserving holonomy.
\end{enumerate}
\end{lemma}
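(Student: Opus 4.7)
My plan is to use the natural $(2n-1)$-form $\mu_\alpha := \alpha\wedge(d\alpha)^{n-1}$ associated to any defining $1$-form $\alpha$ for $\EE$. As already noted in the text, $\mu_\alpha$ is nowhere zero with $\ker\mu_\alpha = \WW$. The key rescaling identity is
\[
(f\alpha)\wedge(d(f\alpha))^{n-1} = f^n \mu_\alpha
\]
for any nowhere-zero function $f$: when expanding $(df\wedge\alpha + f\,d\alpha)^{n-1}$, only the pure $f^{n-1}(d\alpha)^{n-1}$ term survives the wedge with $f\alpha$, since every other term carries an extra factor of $\alpha$ and $\alpha\wedge\alpha = 0$.

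With this in hand, $(1)\Rightarrow(2)$ is immediate: if $d\alpha$ has constant rank $2n-2$, then $d\mu_\alpha = (d\alpha)^n = 0$, so $\mu_\alpha$ itself is the desired closed $(2n-1)$-form. For $(2)\Rightarrow(1)$, given a closed $\Omega$ with $\ker\Omega = \WW$, any fixed $\alpha_0$ gives $\Omega = h\mu_{\alpha_0}$ for a nowhere-zero function $h$; after possibly flipping the sign of $\Omega$ on individual connected components of $M$ so that $h > 0$ everywhere (still a closed form with kernel $\WW$), set $\alpha := h^{1/n}\alpha_0$ to obtain $\mu_\alpha = \Omega$. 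Then $(d\alpha)^n = d\Omega = 0$ forces $\rank(d\alpha) \leq 2n-2$, while $(d\alpha)^{n-1} \neq 0$ gives equality. For $(2)\Leftrightarrow(3)$: when $\Omega$ is closed with $\ker\Omega = \WW$, Cartan's formula gives $L_W\Omega = i_W d\Omega + d\,i_W\Omega = 0$ for every vector field $W$ tangent to $\WW$, and since $\Omega$ restricts to a volume form on any transversal to $\WW$, the flow of $W$, and hence the holonomy of $\WW$, preserves this transverse volume. Conversely, an invariant transverse volume pulls back via leaf-space projections in local foliation charts and patches, by invariance, to a global $(2n-1)$-form $\Omega$ with $\ker\Omega = \WW$ and $L_W\Omega = 0$ for all $W$ tangent to $\WW$; then $i_W d\Omega = 0$, and because $d\Omega$ is a top-degree form contracted with a nowhere-vanishing $W$, this forces $d\Omega = 0$.

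The main obstacle is the sign bookkeeping in $(2)\Rightarrow(1)$, especially when $n$ is even: rescaling $\alpha$ by $f$ can then only scale $\mu_\alpha$ by the positive factor $f^n$, so we cannot realize every $(2n-1)$-form with kernel $\WW$ as some $\mu_\alpha$. This is precisely why we exploit the freedom to replace $\Omega$ by its component-wise sign flip, which is allowed because coorientability places no positivity constraint on $\Omega$. A secondary technical point is the globalization of the transverse volume in $(3)\Rightarrow(2)$, which is the standard foliation-theoretic patching argument and uses holonomy invariance exactly at the overlap of charts.
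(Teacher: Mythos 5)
Your proposal is correct and follows essentially the same route as the paper's proof: you use $\mu_\alpha = \alpha\wedge(d\alpha)^{n-1}$ and its rescaling identity to pass between (1) and (2), rescaling $\alpha$ by $f^{1/n}$ after adjusting the sign of the closed form, and then observe that $(d\alpha)^n = 0$ together with $(d\alpha)^{n-1}\neq 0$ pins the rank at exactly $2n-2$. The only differences are minor elaborations on your part — you spell out the sign-flip component by component (slightly more careful than the paper's global sign flip, which matters only if $M$ is disconnected) and you fill in the details of the $(2)\Leftrightarrow(3)$ equivalence, which the paper simply cites as well known.
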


\begin{proof}
The equivalence of the second and third conditions is well known; both conditions amount to saying that a spanning vector field
is divergence-free with respect to a suitable volume form. 

We prove the equivalence of the first two conditions.
If $d\alpha$ is of constant rank $2n-2$, then $\alpha\wedge (d\alpha)^{n-1}$ is a closed $(2n-1)$-form with kernel $\WW$.
Conversely, suppose that $\beta$ is an arbitrary defining form for $\EE$, and that $\gamma$ is a closed $(2n-1)$-form 
with kernel $\WW$. Then $\beta\wedge (d\beta)^{n-1}$ is another $(2n-1)$-form with kernel $\WW$, and after replacing $\gamma$ 
by its negative if necessary, we  see that 
$$
\gamma = f \beta\wedge (d\beta)^{n-1}
$$
for some positive smooth function $f$ on $M$. Set $\alpha = f^{1/n}\beta$. This is a defining form for $\EE$, with $(d\alpha)^{n}$ identically zero. The rank of $d\alpha$ is therefore strictly smaller than $2n$, and as it can not be smaller than $2n-2$, it is $2n-2$ everywhere.
\end{proof}

In the situation of this lemma, if $\alpha$ is chosen such that $d\alpha$ is of rank $2n-2$, and $W$ is tangent to $\WW$, then $L_W\alpha=i_Wd\alpha$ vanishes, as $\WW$ is in the kernel of $d\alpha$. Thus the flow of $W$ preserves the form $\alpha$, and not just its kernel.

\subsection{Engel structures}\label{ss:Engel}

\begin{definition}\label{d:Engel}
An Engel structure on a $4$-dimensional manifold $M$ is a smooth rank $2$ distribution $\DD$ with the property that $[\DD,\DD]$ is an even contact structure $\EE$. 
\end{definition}

If $\EE$ is an even contact structure and $\DD$ is an Engel structure whose derived distribution $[\DD,\DD]$ coincides with $\EE$, we say that $\EE$ is induced by $\DD$, and that $\DD$ is subordinate to $\EE$. 

\begin{lemma}\label{l:WinD}
If $\DD$ is subordinate to $\EE$, then the characteristic foliation $\WW$ of $\EE$ is contained in $\DD$. 
\end{lemma}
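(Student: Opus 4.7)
The plan is to show that $\DD$ is an isotropic subbundle of $\EE$ with respect to the conformal symplectic structure $d\alpha|_\EE$ discussed in Subsection~\ref{ss:even}, and then conclude by a rank count inside $\EE$.

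First, I would fix a local defining one-form $\alpha$ for $\EE$, so that $\EE = \ker \alpha$. Since $\DD \subset [\DD,\DD] = \EE$, the form $\alpha$ vanishes on $\DD$, and the rank of $d\alpha$ on $\EE$ has constant corank one with kernel exactly $\WW$ (by the definition of $\WW$). Thus $\EE/\WW$ is a rank two bundle carrying the induced conformal symplectic form.

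Next I would verify the key identity $d\alpha|_\DD \equiv 0$. Pick local sections $X,Y$ spanning $\DD$. Because $[\DD,\DD] = \EE$ as a distribution, the bracket $[X,Y]$ is a section of $\EE = \ker \alpha$, so $\alpha([X,Y]) = 0$. Using Cartan's formula
\[
d\alpha(X,Y) = X(\alpha(Y)) - Y(\alpha(X)) - \alpha([X,Y]),
\]
and the fact that $\alpha(X) = \alpha(Y) = 0$, one gets $d\alpha(X,Y) = 0$. Hence the image of $\DD$ in $\EE/\WW$ is an isotropic subspace of a rank two symplectic bundle.

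Finally, an isotropic subspace of a rank two symplectic vector space has dimension at most one, so the projection $\DD \to \EE/\WW$ has rank at most one pointwise. Since $\DD$ has rank two, the intersection $\DD \cap \WW$ has rank at least one; as $\WW$ is a line field, this forces $\WW \subset \DD$. There is no real obstacle here; the only non-formal step is the bracket computation, and it relies directly on the defining property that $[\DD,\DD]$ equals $\EE$ rather than just being contained in it.
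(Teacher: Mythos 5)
Your proof is correct and is essentially the paper's argument run in the contrapositive direction: the paper assumes $\WW_p\not\subset\DD_p$, notes $d\alpha$ is then nondegenerate on $\DD_p$, and derives $\alpha([X,Y])=-d\alpha(X,Y)\neq 0$ as a contradiction, whereas you use the same Cartan-formula identity directly to show $\DD$ is isotropic and then invoke the linear algebra of the rank-two form $d\alpha$ on $\EE$. Both hinge on exactly the same computation, so this is the same proof in a slightly different packaging.
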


\begin{proof}
We argue by contradiction. If $p\in M$ is a point  with $\WW_p$ not contained in $\DD_p$, we choose a local frame $X$, $Y$ for $\DD$ around $p$, and a local defining form $\alpha$ for $\EE$. Then $d\alpha$ is non-degenerate on
$\DD_p$, and so $d\alpha(X,Y)$ does not vanish at $p$. Therefore
$$
\alpha([X,Y])=L_X(\alpha(Y))-L_Y(\alpha(X))-d\alpha(X,Y)=-d\alpha(X,Y)\neq 0 \ ,
$$
contradicting $[X,Y]\in\EE=\ker\alpha$.
\end{proof}

We now discuss orientations for the distributions involved in the definition of an Engel structure subordinate to a given
even contact structure.


\begin{lemma}\label{l:orient}
{\rm 1.} Every Engel structure defines a canonical orientation on its induced even contact structure.

{\rm 2.} The following conditions on a $4$-manifold $M$ endowed with an Engel structure are equivalent:
\begin{itemize}
\item[(a)] $M$ is orientable,
\item[(b)] $\WW$ is orientable,
\item[(c)] $\EE$ is coorientable.
\end{itemize}
\end{lemma}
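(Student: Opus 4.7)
The plan is to treat the two assertions separately. For part~1 the key object is the bundle map $\Lambda^{2}\DD\to\EE/\DD$ induced by the Lie bracket of sections of $\DD$; this map is $C^{\infty}$-linear (its symbol vanishes modulo $\DD$) and nowhere zero, because the Engel condition $[\DD,\DD]=\EE$ together with $\rank\DD=2$ forces $[X,Y]$ to escape $\DD$ for any local frame $X,Y$ of $\DD$. Using the short exact sequence $0\to\DD\to\EE\to\EE/\DD\to 0$, we get
\[
\Lambda^{3}\EE \;\cong\; \Lambda^{2}\DD\otimes(\EE/\DD) \;\cong\; (\Lambda^{2}\DD)^{\otimes 2},
\]
which carries the canonical orientation of any tensor square. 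Concretely, for any local frame $X,Y$ of $\DD$, the triple $X,Y,[X,Y]$ frames $\EE$, and this defines a local orientation independent of the choice of frame.

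For part~2, I would use part~1 to compare orientability of $M$ with coorientability of $\EE$. From $0\to\EE\to TM\to TM/\EE\to 0$ one obtains
\[
\Lambda^{4}TM \;\cong\; \Lambda^{3}\EE\otimes(TM/\EE).
\]
Since $\Lambda^{3}\EE$ is trivial as an oriented line bundle by part~1, $\Lambda^{4}TM$ is trivial if and only if $TM/\EE$ is trivial; this is precisely the equivalence (a)$\,\Leftrightarrow\,$(c). The equivalence (a)$\,\Leftrightarrow\,$(b) then follows directly from Lemma~\ref{l:evenorient} applied with $n=2$ (the case $n$ even covers the Engel setting $2n=4$), which closes the three-way equivalence.

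The only step that requires an explicit computation is verifying that the local orientation in part~1 is frame-independent, i.e.\ that a new frame $(X',Y')=(AX+BY,\,CX+DY)$ of $\DD$ satisfies
\[
X'\wedge Y'\wedge[X',Y'] \;=\; (AD-BC)^{2}\,X\wedge Y\wedge[X,Y].
\]
One factor of $AD-BC$ comes from $X'\wedge Y'=(AD-BC)\,X\wedge Y$, and the second from $[X',Y']\equiv(AD-BC)[X,Y]\pmod{\DD}$, the residual $\DD$-valued derivative terms being annihilated by the wedge with $X',Y'\in\DD$ (which spans a rank two subbundle). Because the resulting factor is a square, no sign choices intervene, and I do not anticipate any real obstacle beyond this elementary bracket calculation.
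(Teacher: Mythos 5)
Your proof is correct and follows essentially the same route as the paper: the paper also orients $\EE$ by the frame $X,Y,[X,Y]$ (your $(AD-BC)^2$ computation is exactly the frame-independence the paper asserts without writing out), deduces (a)$\Leftrightarrow$(c) from the resulting canonical orientation of $\Lambda^3\EE$, and quotes Lemma~\ref{l:evenorient} for (a)$\Leftrightarrow$(b). The tensor-square packaging $\Lambda^3\EE\cong(\Lambda^2\DD)^{\otimes 2}$ is a pleasant conceptual restatement but not a different argument.
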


\begin{proof}
Suppose that $X$ and $Y$ are vector fields forming a local frame for an Engel structure $\DD$. Then $X$, $Y$ and $[X,Y]$ form a local frame for the induced even contact structure, and the local orientation of $\EE$ given by this frame is independent of the choice of $X$ and $Y$. This proves the first statement.

The equivalence of (a) and (c) follows immediately from what we just proved. The equivalence of (a) and (b) was proved in Lemma~\ref{l:evenorient}.
\end{proof}

\subsection{Bi-Engel structures}\label{ss:biEngel}

The first part of Lemma~\ref{l:orient} motivates the following:

\begin{definition}\label{d:biEngel}
A bi-Engel structure on a $4$-dimensional manifold $M$ is a pair of Engel structures  $(\DD_+,\DD_-)$ inducing the same even contact structure $\EE$, defining opposite orientations for $\EE$, and having one-dimensional intersection. 
\end{definition}

By Lemma~\ref{l:WinD}, the two Engel structures making up a bi-Engel structure must both contain the characteristic foliation $\WW$ of the induced even contact structure $\EE$. Thus their intersection is precisely $\WW$, and their span is $\EE$. 

The geometric meaning of the definitions of Engel and bi-Engel structures can be elucidated as follows. The holonomy of the characteristic foliation $\WW$ of an even contact structure $\EE$ preserves $\EE$. An Engel structure $\DD$ subordinate to $\EE$ is a plane field inside $\EE$, which turns in a fixed direction around the axis $\WW$ under the holonomy of $\WW$. Specifying the direction in which $\DD$ turns amounts to specifying an orientation for $\EE$. The two Engel structures $\DD_{\pm}$ making up a bi-Engel structure intersect in $\WW$, and rotate around it in opposite directions under the holonomy of $\WW$. Moreover, the condition that the two Engel planes never coincide, prevents them from making full turns around $\WW$. This means that for the flow $\varphi_t$ of a spanning vector field for $\WW$ one has $D\varphi_{-t}(\DD (\varphi_t(p)))\neq \DD (p)$ for all $t\neq 0$.

To end this section, we point out that the requirement that $\DD_+\cap\DD_-$ be one-dimensional can not be omitted from Definition~\ref{d:biEngel}. 
If two Engel structures, not necessarily forming a bi-Engel structure, are subordinate to the same even contact structure $\EE$ and define opposite orientations of $\EE$, then they turn in opposite directions under the holonomy of the characteristic foliation. Therefore, on every leaf of $\WW$ the points where the two Engel distributions coincide form a discrete subset of the leaf. In particular, the two Engel distributions are different almost everywhere, but it is possible that they coincide at some points. This is what happens in the following example,
which is a variation on the classical prolongation, cf.~\cite{montgomery}.

\begin{example}
Let $N$ be a closed $3$-manifold and $\xi$ a contact structure which is trivial as a vector bundle over $N$. Pick a global framing of $\xi$ by vector fields $X$ and $Y$. Consider $S^1$ with coordinate $t\in\R$ modulo $2\pi$, and let $M=N\times S^1$. The distribution $\EE=\xi\oplus TS^1$ is an even contact structure on $M$ with characteristic foliation $\WW=TS^1=\R\frac{\partial}{\partial t}$.

Let $\DD_{\pm}$ be the span of $\WW$ and $cos(t)\cdot X\pm sin(t)\cdot Y$. Then the $\DD_{\pm}$ are Engel structures subordinate to $\EE$, but inducing opposite orientations on $\EE$. However, they do not form a bi-Engel structure because they agree at the points where $sin(t)=0$.
\end{example} 

\section{Weakly hyperbolic flows}\label{s:hyper}

In this section we introduce a weak notion of hyperbolicity for flows which are tangent to a fixed distribution, and which preserve this distribution.

Let $M$ be a closed manifold, $\EE\subset TM$ a smooth subbundle, and $\WW\subset\EE$ an orientable line field with $[\WW,\EE]\subset\EE$. This ensures that $\EE$ is preserved by any flow tangent to $\WW$. Moreover, such a flow then acts on the quotient bundle $\EE/\WW$.

\begin{definition}\label{d:hyper}
The flow $\varphi_t$ on $M$ generated by a non-zero vector field $W$ spanning $\WW$ is said to be weakly hyperbolic if there are constants $K,c>0$ and a continuous metric on $\EE/\WW$ such that for all $p\in M$ there is a decomposition
$$
\EE(p)/\WW(p) = \EE_+(p) \oplus \EE_-(p)
$$
for which the following inequality holds for all $t>0$ and $0\neq v_\pm\in \EE_\pm$  
\begin{equation}\label{eq:hyper}
\frac{\vert\vert D\varphi_t(v_+)\vert\vert}{\vert\vert v_+\vert\vert} \geq Ke^{ct} \ \frac{\vert\vert D\varphi_t(v_-)\vert\vert}{\vert\vert v_-\vert\vert} \ .
\end{equation}
\end{definition}

This condition is independent of the spanning vector field $W$ chosen for $\WW$, as long as we fix an orientation for $\WW$. It is also independent of 
the choice of metric $g$, cf.~\cite{AnS}.
\begin{remark} \label{r:mit und ohne K}
If $\varphi$ is weakly hyperbolic with respect to the metric $g$, then after replacing $g$ by $1/T\int_0^T\varphi_t^*g\,dt$ one can choose $K=1$ if $T$ is large enough.
\end{remark}

\begin{lemma} \label{l:Epm are bundles}
The subspaces $\EE_\pm(p)$ for $p\in M$ of $\EE/\WW$ in Definition~\ref{d:hyper} are $\varphi_t$--invariant, have constant dimension and 
depend continuously on $p$. 
\end{lemma}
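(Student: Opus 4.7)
The natural approach is to prove the three properties in sequence, each relying on the previous, with uniqueness of the splitting at a point as the technical heart.

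For $\varphi_t$--invariance I would exploit the cocycle property $D\varphi_{s+t}=D\varphi_t\circ D\varphi_s$. Given $v_\pm\in\EE_\pm(p)$, set $u_\pm=D\varphi_s(v_\pm)\in\EE(\varphi_s(p))/\WW(\varphi_s(p))$. Writing the bound~(\ref{eq:hyper}) at $p$ for times $s+t$ and $s$ and taking the quotient yields
\[
\frac{\|D\varphi_t(u_+)\|/\|u_+\|}{\|D\varphi_t(u_-)\|/\|u_-\|}\;\geq\;e^{ct},
\]
so the transported decomposition $D\varphi_s(\EE_\pm(p))$ is itself a splitting of $\EE(\varphi_s(p))/\WW(\varphi_s(p))$ satisfying an inequality of the form~(\ref{eq:hyper}) (with multiplicative constant $1$, which is harmless in view of Remark~\ref{r:mit und ohne K}). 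It remains to identify this transported splitting with $\EE_\pm(\varphi_s(p))$, which reduces to a uniqueness statement: at any fixed $q\in M$, any two splittings of $\EE(q)/\WW(q)$ satisfying~(\ref{eq:hyper}) must coincide. I would prove this by contradiction, assuming two distinct splittings $F_\pm$ and $G_\pm$ at $q$: a non-zero $v\in F_+\cap G_-$ would grow strictly faster than every direction in $F_-$ by the $F$-bound and strictly slower than every direction in $G_+$ by the $G$-bound; decomposing $F_-$ along $G$ and $G_+$ along $F$ and iterating~(\ref{eq:hyper}) for large $t$ produces asymptotically incompatible ratios.

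With invariance established, constant dimension and continuity both follow from a Grassmannian compactness argument. Given a sequence $p_n\to p_0$, I would extract subsequential limits of $\EE_\pm(p_n)$ in the appropriate Grassmannian bundles over $M$. By continuity of $D\varphi_t$, continuity of the metric on $\EE/\WW$, and uniformity of the constants $K,c$ over $M$, these limits form a splitting of $\EE(p_0)/\WW(p_0)$ satisfying~(\ref{eq:hyper}); uniqueness then forces them to equal $\EE_\pm(p_0)$. This gives continuity, and combined with semicontinuity of the Grassmannian index it also forces $\dim\EE_\pm$ to be locally — and hence globally — constant.

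The main obstacle is the uniqueness step. Although it is an instance of the classical uniqueness of dominated splittings of a given index in hyperbolic dynamics, deriving it directly from~(\ref{eq:hyper}) requires careful bookkeeping: the inequality only controls \emph{ratios} of norms, not the individual norms, so to rule out a cross-splitting direction one must iterate to arbitrarily large $t$ so that the exponential factor $e^{ct}$ overwhelms the bounded ambiguity coming from projecting a $+$-vector of one splitting onto both factors of the other.
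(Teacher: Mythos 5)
There are two genuine gaps, and together they undermine the architecture of your argument. The first is the ``taking the quotient'' step. Write $A(u)=\|D\varphi_u(v_+)\|/\|v_+\|$ and $B(u)=\|D\varphi_u(v_-)\|/\|v_-\|$. Inequality~\eqref{eq:hyper} gives the two \emph{lower} bounds $A(s+t)/B(s+t)\geq Ke^{c(s+t)}$ and $A(s)/B(s)\geq Ke^{cs}$, and your displayed inequality for $u_\pm=D\varphi_s(v_\pm)$ is precisely $\bigl(A(s+t)/B(s+t)\bigr)\cdot\bigl(A(s)/B(s)\bigr)^{-1}\geq e^{ct}$. One cannot divide one lower bound by another: Definition~\ref{d:hyper} provides no \emph{upper} bound on $A(s)/B(s)$, which may exceed $Ke^{cs}$ by an amount that is not uniformly controlled in $s$. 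The inequality you actually need, $R(s+t)\geq K'e^{ct}R(s)$ for $R=A/B$, is via the cocycle property essentially equivalent to the invariance you are trying to prove, so the step is circular; Remark~\ref{r:mit und ohne K} normalises $K$ but does not supply the missing upper bound.

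The second gap is that the pointwise uniqueness statement is false: at a single point only $\EE_-$ is pinned down by the forward inequality. If in a basis $e_1,e_2$ of $\EE(q)/\WW(q)$ one has $\|D\varphi_t(ae_1+be_2)\|^2=a^2+b^2e^{2ct}$, then besides $\EE_-=\langle e_1\rangle$, $\EE_+=\langle e_2\rangle$ the alternative choice $G_-=\langle e_1\rangle$, $G_+=\langle e_1+e_2\rangle$ also satisfies~\eqref{eq:hyper} at $q$ (with $K$ replaced by $K/\sqrt{2}$), since $\|D\varphi_t(e_1+e_2)\|/\|e_1+e_2\|\geq e^{ct}/\sqrt{2}$ while $\|D\varphi_t(e_1)\|/\|e_1\|=1$. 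Your contradiction only treats $F_+\cap G_-\neq\{0\}$, which in rank two forces $F_+=G_-$ and says nothing about the generic discrepancy $F_-=G_-$, $F_+\neq G_+$; in higher rank even $\dim\EE_-$ is not determined, since a three-way dominated splitting admits two valid groupings. Hence neither the transported splitting nor the Grassmannian limit can be identified with $\EE_\pm$ by a purely pointwise argument. The paper proceeds differently: it first establishes its inequality~\eqref{eq:hyper2 mod} --- every vector outside $\EE_-(p)$ eventually dominates every vector of $\EE_-(p)$, with constants depending only on the angle, which is exactly the bookkeeping you describe in your final paragraph --- then proves continuity of $\EE_-$ by playing this estimate at $p_n$ against the same estimate at $p$, handles $\EE_+$ by running the flow backwards rather than by pointwise uniqueness, and only afterwards reads off invariance from the resulting asymptotic characterisation of $\EE_-$. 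Redirecting your domination bookkeeping to that statement, rather than to a false uniqueness claim, is what is needed to make the proof work.
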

\begin{proof} 
The proof is a modification of a proof in \cite{AnS}, p.~121. 

Let first $p\in M$ be arbitrary.
Note that if $0\neq X\in\EE_-(p)$, then for all $Y\in\EE\setminus\EE_-$ there are constants $T_Y,K_Y>0$ depending only on the angle between 
$Y$ and $\EE_-$ (and $K,c$, of course) such that
\begin{equation} \label{eq:hyper2 mod}
\frac{\|D\varphi_t(Y)\|}{\|D\varphi_t(X)\|}\ge K_Ye^{ct}\frac{\|Y\|}{\|X\|}
\end{equation}
for $t>T_Y$.

For the verification let $X\in\EE_-$ and fix $Y\in\EE\setminus\EE_-$. We write $Y=Y_++Y_-$ with $Y_\pm\in\EE_\pm, 0\neq Y_+$ and $\kappa>0$ such that $\|Y_-\|\le\kappa\|Y_+\|$. By \eqref{eq:hyper} we have 
$$
\frac{\|D\varphi_t(Y_-)\|}{\|D\varphi_t(Y_+)\|}\le K^{-1}e^{-ct}\kappa<1
$$
where the last inequality holds for large enough $t$.  Then because of
\begin{align*}
\|Y\| & \le \|Y_+\|+\|Y_-\| \le (1+\kappa)\|Y_+\|
\end{align*} 
and \eqref{eq:hyper} we get
\begin{align*}
\frac{\|D\varphi_t(Y)\|}{\|D\varphi_t(X)\|}  &  \ge \frac{\|D\varphi_t(Y_+)\|-K^{-1}e^{-ct}\kappa \|D\varphi_t(Y_+)\|}{\|D\varphi_t(X)\|}  \\
& \ge \frac{K-\kappa e^{-ct}}{1+\kappa}e^{ct}\frac{\|Y\|}{\|X\|} \ .
\end{align*}
Thus we can  choose $T_Y$ so large that $K>2\kappa e^{-cT_Y}$ and $K_Y=\frac{K}{2(1+\kappa)}$. These constants depend only on $K,c$ and $\kappa$. 

Now we show that $\EE_-$ is continuous at $p\in M$. Let $p_n$ be a sequence converging to $p$. 
After passing to a subsequence we may assume that 
$\lim_{n\to\infty}[\EE_\pm(p_n)]=[\EE'_\pm]$ for some $\EE'_\pm$, and that $\dim(\EE_+(p_n))$ is constant. 
Since $\dim(\EE_+(p_n))$ and $\dim(\EE_-(p_n))$
have constant sum ($=\dim(M)-1$), the latter is also constant.

Let us assume that $\EE'_-$ is not contained in $\EE_-(p)$. Then we may fix sequences $X_n\in\EE_-(p_n), Y_n\in\EE(p_n)$ such that $\lim_{n\to\infty}X_n=X\notin\EE_-(p)$ and $\lim_{n\to\infty}Y_n=Y\in\EE_-(p)$. In particular, we may assume that the angle between $Y_n$ and $\EE_-(p_n)$ is uniformly bounded away from $0$. This means that for $Y_n=Y_{n+}+Y_{n-}, Y_{n\pm}\in\EE_\pm(p_n)$ the ratio $\|Y_{n-}\|/\|Y_{n+}\|$ is bounded from above by a constant $\kappa>0$ which is independent of $n$. The constants $T_Y, K_Y$ appearing in~\eqref{eq:hyper2 mod} actually depend only on $c,K$ and $\kappa$, thus choosing $T_Y$ independently of $n$ such that for $t>T_Y$ we have
\begin{align*}
\frac{\|D\varphi_t(Y_n)\|}{\|D\varphi_t(X_n)\|} & \ge K_Ye^{ct} \frac{\|Y_n\|}{\|X_n\|} \\
\frac{\|D\varphi_t(X)\|}{\|D\varphi_t(Y)\|} & \ge K_Ye^{ct} \frac{\|X\|}{\|Y\|}.
\end{align*}
Since $\varphi_t$ is smooth, we get a contradiction if $t$ satisfies $K_Ye^{ct}>1$ as $n$ goes to $\infty$. This implies $\EE'_-\subset\EE_-(p)$. 

Considering $\varphi_{-t}$ instead of $\varphi_t$ one shows $\EE_+\subset\EE_+(p)$. The fact that 
$\dim(\EE_+(p))+\dim(\EE_-(p))=\dim(M)-1=\dim(\EE_+')+\dim(\EE_-')$ then implies $\EE_\pm'=\EE_\pm(p)$. 

The $\varphi_t$--invariance of the bundles $\EE_-$ now follows from the property described in \eqref{eq:hyper2 mod} since this property can be used to characterize the elements of $\EE_-$.   
\end{proof}

If we change the orientation of $\WW$, by replacing $W$ with $-W$, say, then weak hyperbolicity is preserved, but the roles of $\EE_{\pm}$ are interchanged. The holonomy of $\WW$ preserves $\EE$ and acts naturally on the quotient $\EE/\WW$, and the condition in the definition is that the holonomy is much more expanding on $\EE_+$ than on $\EE_-$. 
This does not preclude the possibility that the holonomy could be expanding (or contracting) on both $\EE_{\pm}$, as long as the expansion (or contraction) rates are such that~\eqref{eq:hyper} is satisfied. In the case that $\EE$ is the tangent bundle of a three-manifold, Definition~\ref{d:hyper} reduces to the definition of flows that are conformally Anosov~\cite{ET} or projectively Anosov (pA)~\cite{M}.

By an obvious simplification of terminology, we call $\WW$ weakly hyperbolic, without saying something like ``weakly hyperbolic with  respect to $\EE$''. A given line field $\WW$ may of course preserve several distributions it is contained in, and be weakly hyperbolic for some but not for others. However, it will always be clear which distribution is used for $\EE$ when discussing weak hyperbolicity of $\WW$.

If the distribution $\EE$ is integrable, then it defines a foliation, and a flow tangent to $\WW\subset\EE$ restricts to every leaf of this foliation. The flow is weakly hyperbolic in the sense of Definition~\ref{d:hyper} if and only if its restriction to every leaf is conformally Anosov. 

For the purposes of this paper we are interested in the case when $\EE$ is an even contact structure, and $\WW$ is its 
characteristic foliation. If the dimension of $M$ is four, then $\EE$ has rank three, and the subbundles $\EE_{\pm}$ are 
actually line fields. However, even in higher dimensions, when these subbundles have higher rank, they tend to have a 
very specific geometry. We shall return to this in Subsection~\ref{ss:contactAnosov} below.


\section{Proof of the main theorem}\label{s:main}

In this section we prove Theorem~\ref{t:main}. 
In the proof we shall use some facts about the cross ratio. 
One of the numerous sources for this material is~\cite{Be}.

Let $V$ be a real vector space of dimension $2$. If $x_1,x_2,x_3\in\mathbb{P}(V)$ are distinct and $z\in\mathbb{P}(V)$ is arbitrary, then the cross ratio $[x_1,x_2,x_3,z]\in\R\mathbb{P}^1$ is the image of $z$ under the unique homography $f: \mathbb{P}(V)\lra\R\mathbb{P}^1$ with $f(x_1)=[1:0], f(x_2)=[0:1] ,f(x_3)=[1:1]$. In particular, if $f : V \lra V'$ is a linear isomorphism (in our application of the cross ratio $f$ will be the linearized holonomy of a foliation of rank $1$) and $\underline{f}$ is the induced map between projective spaces, then 
$$
[x_1,x_2,x_3,z]=\left[\underline{f}(x_1),\underline{f}(x_2),\underline{f}(x_3),\underline{f}(z)\right] \ .
$$
After identifying $\R\mathbb{P}^1\setminus[1:0]$ with the real numbers, we can treat the cross ratio as a number unless $z=x_1$. 
In other words, $[x_1,x_2,x_3,z]=[1:0]\hat{=}\infty$ if and only if $z=x_1$. 

If $x_1,x_2,x_3,z\in\mathbb{P}(V)\setminus\{\mathrm{pt}\}$ are pairwise distinct, then the cross ratio $[x_1,x_2,x_3,z]\in\R\mathbb{P}^1\setminus\{\infty\}$ can be computed in terms of affine coordinates on $\mathbb{P}(V)\setminus\{\mathrm{pt}\}$ as follows:
$$
[x_1,x_2,x_3,z]=\frac{(x_3-x_1)(z-x_2)}{(z-x_1)(x_3-x_2)} \ .
$$
Using this formula one can show the following relation for pairwise distinct points $x,a,a',b',b,y$ of $\PP(V)$
$$
[x,a',b',y]=[x,a,b,y] \cdot [a,a',b',b] \cdot [a,a',b,y] \cdot [x,a,b',b] \ .
$$
In particular, when the points $x,a,a',b',b,y$ lie in this order on the projective line $\mathbb{P}(V)$, then it follows from the definition of the cross ratio that 
\begin{align*}
[a,a',b,y]& >1 & [x,a,b',b] & >1 \ .
\end{align*}
Therefore we obtain the following inequality if the assumption on the ordering of $x,a,a',b',b,y$ is satisfied:
\begin{equation} \label{doppelvrel}
[x,a',b',y] > [x,a,b,y] \cdot [a,a',b',b] \ .
\end{equation} 

We can finally prove our main result.

\begin{proof}[Proof of Theorem~\ref{t:main}]
Recall that by Lemma~\ref{l:evenorient} the characteristic foliation $\WW$ is orientable if and only if the same is true for $M$. 

Let $\EE$ be an orientable even contact structure whose characteristic foliation $\WW$ is weakly hyperbolic and oriented. 
We fix a positive spanning vector field $W$ for $\WW$ and denote its flow by $\varphi_t$. We also fix the splitting 
$\EE/\WW = \EE_+ \oplus \EE_-$, a continuous metric $g$ and 
constants $c$ and $K$ as in the definition of weak hyperbolicity. By Remark~\ref{r:mit und ohne K} we may assume $K=1$. 

Assume first that the line fields $\EE_{\pm}$ are orientable, and that $X_{\pm}$ are sections of $\EE$ projecting to $\EE/\WW$ 
as spanning vector fields for $\EE_{\pm}$, of unit length with respect to $g$, say. As the line fields $\EE_{\pm}$ are invariant 
under the flow of $W$, we find that there are continuous real-valued functions $\lambda_{\pm}(t,p)$ on $\R\times M$ such that
$$
D_p\varphi_t (X_{\pm}(p)) = \lambda_{\pm} (t,p)X_{\pm}(\varphi(t)) \mod \WW.
$$
That $\varphi_t$ is a flow implies $\lambda_{\pm} (0,p)=1$, and 
$$
\lambda_{\pm}(t,\varphi_s(p)) \cdot \lambda_{\pm}(s,p) = \lambda_{\pm}(t+s,p)
$$
for all $p\in M$. The definition of weak hyperbolicity of the flow in this case means that there is a constant $c>0$ such that  
\begin{equation}\label{eq:wh}
\lambda_+(t,p)\geq e^{ct}\lambda_-(t,p)
\end{equation}
for all $p\in M$ and all $t\geq 0$.

If we assume that the vector fields $X_{\pm}$ are smooth, then so are the functions $\lambda_{\pm}$. In this case, by differentiating 
at $0\in\R$, the inequality~\eqref{eq:wh} implies 
\begin{equation}\label{eq:whdiff}
\lambda '_+(0,p)\geq c+\lambda '_-(0,p).
\end{equation}
We can define smooth rank two subbundles $\DD_{\pm}\subset\EE$ as the span of $W$ and $X_+\pm X_-$. Using the smoothness assumption, we can calculate commutators:
\begin{alignat*}{1}
[W,X_{\pm}](p) &= \left.\frac{d}{dt}\right|_{t=0}(D\varphi_{-t})\left(X_{\pm}(\varphi_t(p))\right) \\
&= \left.\frac{d}{dt}\right|_{t=0}\left( \frac{1}{\lambda_{\pm}(t,p)}X_{\pm}(p)\right) \mod \WW \\
&= -\lambda'_{\pm}(0,p)X_{\pm}(p).
\end{alignat*}
It follows that 
$$
[W,X_+\pm X_-](p)+\lambda '_+(0,p)(X_+\pm X_-)(p) = \pm (\lambda '_+(0,p)-\lambda '_-(0,p))X_-(p) \mod \WW.
$$
Combining this with~\eqref{eq:whdiff} we see that the $\DD_{\pm}$ are Engel structures subordinate to $\EE$ and that they induce opposite orientations of $\EE$. Thus they form a bi-Engel structure.

Now let us consider the case when the $X_\pm$ are only continuous, not necessarily smooth. 
In this case we first show that we may assume the $X_\pm$ 
to have continuous first and second derivatives along the flow lines of $W$. 
To achieve this we fix a mollifier, i.e.~a smooth function $h \colon\R\lra\R^+_0$ with support in 
$[-1,1]$ and $\int_{\R} h(s)ds=1$, and consider the usual convolution
\begin{equation} \label{eq:glatt entlang fluss}
\big(h*X_\pm\big)(p)=\int_{\R} h(s)D\varphi_s\left(X_\pm\left(\varphi_{-s}(p)\right)\right)ds \ .
\end{equation}
By definition $h*X_\pm$ is a section of $\EE_\pm$ which is nowhere tangent to $\mathcal{W}$. 
When $h(s)$ is replaced by $h_\kappa(s)=\kappa h(\kappa s)$ in~\eqref{eq:glatt entlang fluss} then $h_\kappa*X_\pm$ converges uniformly to 
$X_\pm$ as $\kappa\to\infty$.  
Moreover, the restrictions of $h*X_\pm$ to segments of $\WW$ are smooth when viewed as sections of the smooth bundle 
$\EE/\WW$. The derivatives
$$
L_W(h*X_\pm)(p) =\lim_{\eta\to 0}\frac{D\varphi_{-\eta}\left(h*X_{\pm}\right)(\varphi_{\eta}(p)) -\left(h*X_{\pm}\right)(p) }{\eta} 
$$
are continuous on $M$ (not only along the leaves of $\WW$), the same is true for derivatives of higher order.   

We choose smooth sections $Z_\pm$ of $\EE$ which are $C^0$ close to $X_\pm$ and such that the first and second derivatives along 
$\WW$ are also close to those of $X_\pm$. There are continuous functions $w_\pm,s_\pm,u_\pm$ which are $C^2$ along the leaves of 
$\WW$ such that 
$$
Z_\pm=w_\pm W+s_\pm X_-+u_\pm X_+ \ .
$$
Because $Z_+$ approximates $X_+$, the function $s_+$ is $C^0$-close to $0$ and $u_+$ is $C^0$-close to $1$, and similarly 
for the approximation of $X_-$ by $Z_-$.
Their first derivatives in the direction of $\WW$ are close to zero. Therefore the calculation of commutators performed with $Z_\pm$ in place of $X_\pm$ shows that $W$ and $Z_+\pm Z_-$ span two Engel structures $\DD_\pm$ subordinate to $\EE$ and inducing opposite orientations on $\EE$. 

Finally if the line bundles $\EE_\pm$ are non-trivial, then we can only choose $X_\pm$ up to sign. Nevertheless, the functions $\lambda_\pm$ are well-defined, and the whole argument goes through by using the approximating sections $Z_\pm$ to be invariant under sign change. Thus we have proved that an even contact structure with weakly hyperbolic characteristic foliation has a subordinate bi-Engel structure.

It remains to prove the converse. Let $(\DD_+,\DD_-)$ be a bi-Engel structure subordinate to $\EE$ and $W$ a vector field spanning the characteristic foliation $\WW\subset\EE$. 
Then the flow $\varphi_t$ of $W$ preserves $\EE$. In order to show that this is weakly hyperbolic we have to find a splitting 
$\EE/\WW=\EE_+\oplus\EE_-$ such that~\eqref{eq:hyper} holds. This is done in two steps. First we find invariant plane fields 
$\DD^\infty$ and $\DD^{-\infty}$ whose intersection is $\WW$. Then we check weak hyperbolicity for the induced splitting with 
$\EE_{\pm}=\DD^{\pm\infty}/\WW$. 

For the plane fields $\DD^{\pm\infty}$ we have candidates
\begin{align*} 
\DD_\pm^{\infty}(p) & =\lim_{t\to\infty}D\varphi_{-t}\left(\DD_\pm(\varphi_t(p))\right)  
\\
\DD_\pm^{-\infty}(p)& =\lim_{t\to-\infty}D\varphi_{-t}\left(\DD_\pm(\varphi_t(p)\right).
\end{align*}
for $p\in M$. Each of these limits exists. We explain this for $\DD_+^\infty$. Let $p\in M$ and consider the planes $D\varphi_{-t}\left(\DD_{+}(\varphi_{t}(p))\right)$ and  $D\varphi_{-t}\left(\DD_{-}(\varphi_{t}(p))\right)$ in $\EE(p)$. 
Both of them contain $W(p)$ and the fact that $\DD_{\pm}$ are Engel structures inducing opposite orientations of $\EE$ ensures that these planes rotate without stopping around $\WW$ in opposite directions as $t$ increases. Since they are always transverse to each other this implies that the limit defining $\DD_+^\infty$ exists. 

\begin{center}
\begin{figure}[htb] 
\begin{tikzpicture}[y=0.80pt, x=0.8pt,yscale=-1, inner sep=0pt, outer sep=0pt]
  \path[draw=black,line join=miter,line cap=butt,line width=0.658pt]
    (347.6554,78.6456) -- (347.6554,282.3446);
  \path[draw=black,line join=miter,line cap=butt,line width=0.800pt]
    (157.7959,195.2811) -- (560.5908,195.2811);
  \path[draw=black,line join=miter,line cap=butt,miter limit=4.00,line
    width=0.715pt] (530.7460,124.6348) -- (209.5463,249.5849);
  \path[draw=black,dash pattern=on 2.18pt off 2.18pt,line join=miter,line
    cap=butt,miter limit=4.00,line width=0.727pt] (475.7298,90.6473) --
    (246.6346,277.1282);
  \path[draw=black,dash pattern=on 2.16pt off 2.16pt,line join=miter,line
    cap=butt,miter limit=4.00,line width=0.719pt] (284.1811,86.2759) --
    (391.9036,275.5104);
  \path[draw=black,line join=miter,line cap=butt,line width=0.800pt]
    (217.8719,132.7198) -- (488.0095,263.6778);
  \path[fill=black] (502.74252,181.46201) node[above right] (text5323)
    {$\mathcal{E}_+(p)=\mathcal{D}^{-\infty}$};
  \path[fill=black] (543.21155,122.14918) node[above right] (text5327)
    {$\mathcal{D}_+(p)$};
  \path[fill=black] (485.59116,97.670372) node[above right] (text5331)
    {$D\varphi_{-t}(\mathcal{D}_+(\varphi_t(p)))$};
  \path[fill=black] (351.25247,105.15765) node[above right] (text5335)
    {$\mathcal{E}_-(p)=\mathcal{D}^\infty$};
  \path[fill=black] (224.33167,124.4982) node[above right] (text5339) {};
  \path[fill=black] (205.53947,123.3237) node[above right] (text5343)
    {$\mathcal{D}_-(p)$};
  \path[fill=black] (222.69386,79.713348) node[above right] (text5347)
    {$D\varphi_{-t}(\mathcal{D}_-(\varphi_t(p)))$};
  \path[draw=black,line join=miter,line cap=butt,line width=0.800pt]
    (555.2445,192.1926) -- (555.2445,192.1926);
  \begin{scope}[cm={{1.00842,0.0,0.0,1.0,(-4.63463,0.0)}}]
    \path[draw=black,line join=miter,line cap=butt,line width=0.800pt]
      (560.2275,195.3330) -- (550.2614,189.0523);
    \path[draw=black,line join=miter,line cap=butt,line width=0.800pt]
      (560.2794,195.2551) -- (550.3133,201.5358);
  \end{scope}
  \begin{scope}[cm={{0.0,-1.0,1.0,0.0,(152.15966,639.16537)}}]
    \path[draw=black,line join=miter,line cap=butt,line width=0.800pt]
      (560.2275,195.3330) -- (550.2614,189.0523);
    \path[draw=black,line join=miter,line cap=butt,line width=0.800pt]
      (560.2794,195.2551) -- (550.3133,201.5358);
  \end{scope}
  \path[shift={(0.45594,-1.36781)},draw=black,dash pattern=on 0.80pt off
    1.60pt,miter limit=4.00,line width=0.800pt]
    (284.4858,180.4035)arc(193.912:255.174:61.551);
  \path[cm={{-1.0,0.0,0.0,1.0,(714.37766,-8.09306)}},draw=black,dash pattern=on
    0.80pt off 1.60pt,miter limit=4.00,draw opacity=0.984,line width=0.800pt]
    (285.6129,186.0911)arc(188.513:240.032:59.271652 and 61.551);
  \begin{scope}[cm={{0.94195,-0.33577,0.33577,0.94195,(-264.29004,138.29416)}}]
    \path[draw=black,dash pattern=on 0.80pt off 0.80pt,line join=miter,line
      cap=butt,miter limit=4.00,line width=0.800pt] (560.2275,195.3330) --
      (550.2614,189.0523);
    \path[draw=black,dash pattern=on 0.80pt off 0.80pt,line join=miter,line
      cap=butt,miter limit=4.00,line width=0.800pt] (560.2794,195.2551) --
      (550.3133,201.5358);
  \end{scope}
  \begin{scope}[cm={{-0.78444,-0.62021,0.62021,-0.78444,(716.67326,633.2144)}}]
    \path[draw=black,dash pattern=on 0.80pt off 0.80pt,line join=miter,line
      cap=butt,miter limit=4.00,line width=0.800pt] (560.2275,195.3330) --
      (550.2614,189.0523);
    \path[draw=black,dash pattern=on 0.80pt off 0.80pt,line join=miter,line
      cap=butt,miter limit=4.00,line width=0.800pt] (560.2794,195.2551) --
      (550.3133,201.5358);
  \end{scope}

\end{tikzpicture}

\caption{Configuration of lines in $\EE(p)/\WW(p)$\label{b:lines}}
\end{figure}
\end{center}

Let us now show that $\DD_+^{\infty}(p)=\DD_-^{\infty}(p)$ for all $p\in M$. Since $M$ is compact, there exists a sequence $(t(i))_{i\in\N}$ and $q\in M$ such that $\lim_{i\to\infty}t(i)=\infty$ and $\lim_{i\to\infty}\varphi_{t(i)}(p)=q$. Fix a compact local transversal $C$ of $\WW$ through $q$ and $\eps>0$ such that
\begin{alignat*}{2}
C \times &[-\eps,\eps] &&\longrightarrow M \\
(c\ , &\ \tau) &&\longmapsto \varphi_\tau(c)
\end{alignat*}
is an embedding. For $t\in\R$ let 
$$
d_\pm(t)=\left[D\varphi_{-t}\left(\DD_\pm(\varphi_{t}(p))\right)\right]\in\mathbb{P}(\EE(p)/\WW(p)).
$$
Recall that $\DD_+\cap\DD_-=\WW$. Because $\DD_+,\DD_-$ are Engel structures which induce opposite orientations of $\EE$, it follows that for $0<t<s$, the lines $d_+(0),d_+(t),d_+(s),d_-(s),d_-(t),d_-(0)$ are ordered in this way on $\PP(\EE(p)/\WW(p))$ and these six lines are all distinct. 
In particular, all cross ratios below take values in $(1,\infty )$.
By compactness of $C$ there is an $\alpha>1$ such that 
$$
[d_+(t(i)-\eps),d_+(t(i)+\eps),d_-(t(i)+\eps),d_-(t(i)-\eps)]>\alpha
$$
for all $i$. According to \eqref{doppelvrel}  
\begin{align*}
[d_+(0),d_+(t(i)+\eps),d_-(t(i)+\eps),d_-(0)] & > \alpha[d_+(0),d_+(t(i)-\eps),d_-(t(i)-\eps),d_-(0)] \\ 
& >\alpha[d_+(0),d_+(t(i-1)+\eps),d_-(t(i-1)+\eps),d_-(0)]\\
& >\ldots>\alpha^i.
\end{align*}
Hence $\lim_{t\to\infty}[d_+(0),d_+(t),d_-(t),d_-(0)]=\infty$. This implies $\lim_{t\to\infty}d_+(t)=\lim_{t\to\infty}d_-(t)$ and we have proved $\DD^{\infty}_+=\DD^{\infty}_-=:\DD^{+\infty}$ (and $\DD^{-\infty}_+=\DD^{-\infty}_-=:\DD^{-\infty}$).


We now define $\EE_+=\DD^{-\infty}$ and $\EE_-=\DD^{\infty}$. This choice of signs is the correct one in view of~\eqref{eq:hyper} and the standard definition of the commutator used to orient $\EE=\EE_+\oplus\EE_-$; c.f.~Figure~\ref{b:lines}.

In view of Lemma~\ref{l:Epm are bundles} the continuity of $\EE_\pm$ is automatic, however there is a simple argument in the present situation. 
Let $p\in M$ be arbitrary. If $|T|$ is large enough, then $d_\pm(T)$ are very close to each other at $p$ and for $T>0$ respectively $T<0$, the 
section of $\PP(\EE/\WW)$ which corresponds to $\DD^{\infty}$ respectively $\DD^{-\infty}$ is confined between $d_+(T)$ and $d_-(T)$ near $p$. 
Therefore $\DD^{+\infty}$ and $\DD^{-\infty}$ are continuous plane fields. 

It follows immediately from the definition of $\DD^{\pm\infty}$ that these plane fields are preserved by the holonomy of $\WW$. 
From the condition that $\DD_+$ and $\DD_-$ are always transverse to each other in $\EE$ it follows that $\DD^{+\infty}\neq\DD^{-\infty}$. 

It remains to find a continuous Riemannian metric on $\EE/\WW$ and constants $c>0$ and $K>0$  such that 
\begin{equation} \label{e:partial hyperbolic proof}
\frac{\|D\varphi_t(v_+)\|}{\|v_+\|} \geq Ke^{ct} \frac{\|D\varphi_t(v_-)\|}{\|v_-\|}
\end{equation}
for all $t>0$ and $0\neq v_\pm\in\EE_\pm$. 

Let $X_\pm$ be nowhere vanishing sections of $\EE_\pm$ such that 
\begin{itemize}
\item[(i)] $V=X_+ +X_-$ is smooth and tangent to $\DD_+/\WW$,
\item[(ii)] $X_+$, $X_-$ is a positively oriented framing of $\EE/\WW$ with respect to the orientation defined by $\DD_+$,
\item[(iii)]  $X_+,X_-$ are smooth along the leaves of $\WW$. As above, this can be achieved by convoluting 
$V,X_+,X_-$ with the same bump function. 
\end{itemize}
Because the flow of $W$ preserves $\EE_\pm$ there are continuous functions $\alpha_\pm$ such that 
\begin{equation*}
\left.\frac{d}{dt}\right|_{t=0}\left(D\varphi_{-t}(X_\pm)\right)=\alpha_\pm X_\pm.
\end{equation*}

This implies
\begin{align*}
[W,V] & = \left.\frac{d}{dt}\right|_{t=0}\left(D\varphi_{-t}(X_+ +X_-)\right) \\  
         & = \alpha_+ X_+ + \alpha_-X_-.
\end{align*}
Since $\EE$ is oriented by $W,V,[W,V]$ and this orientation is equivalent to the one given by $W,X_+,X_-$, it follows that 
$\alpha_->\alpha_+$. For all $T\in\R$ there are continuous functions $\lambda_\pm(T)$ on $M$ such that $D\varphi_TX_\pm=\lambda_\pm(T)X_\pm$. These functions satisfy
\begin{align*}
\lambda'_\pm(T)X_\pm &  = \left.\frac{d}{dt}\right|_{t=T}D\varphi_t\left(X_\pm\right) = -D\varphi_T\left(\left.\frac{d}{dt}\right|_{t=0}D\varphi_{-t}X_\pm\right) \\
                                  & = -D\varphi_T(\alpha_\pm X_\pm) = -\left(\alpha_\pm\circ\varphi_{-T}\right) \lambda_\pm(T)X_\pm.
\end{align*}
By definition $\lambda_\pm(T)$ is positive for all $T$. Because of the compactness of $M$, there is a positive number $c$ such that $\alpha_- - \alpha_+>c$. 
Thus we have the following differential inequality
\begin{align*}
\left.\frac{d}{dt}\right|_{t=T} \log\left(\frac{\lambda_+(t)}{\lambda_-(t)}\right) & = \frac{\lambda'_+(T)}{\lambda_+(T)}-\frac{\lambda'_-(T)}{\lambda_-(T)} \\
& = -\alpha_+\circ\varphi_{-T}+\alpha_-\circ\varphi_{-T} \\
& > c.
\end{align*}
If we choose a metric on $\EE/\WW$ for which $X_+$, $X_-$ is an orthonormal frame, then we get the desired inequality~\eqref{e:partial hyperbolic proof} by integration.
\end{proof}

\section{Examples and further discussion}\label{s:final}

\subsection{The Thurston geometry $\Sol$}

The Lie group $\Sol$ is a semidirect product 
$$
1 \lra \Nil^3 \lra \Sol \lra \R \lra 1 \ ,
$$
where $\Nil^3$ is the three-dimensional Heisenberg group, and $\R$ acts by $t\cdot (x,y,z) = (e^{-t}x,e^t y,z)$. The Lie 
algebra of $\Nil^3$ has a basis $X$, $Y$ and $Z$ with $Z$ central and $[X,Y]=Z$. Therefore $X$ and $Y$ span a contact
structure $\xi$ on $\Nil^3$. The action of $\R$ preserves $\xi$ and acts on it contracting $X$ and expanding $Y$. The Lie 
algebra of $\Sol$ has an additional generator $W$ with 
$$
[W,X]=-X \ , \ [W,Y]=Y \ , \ [W,Z]=0 \ .
$$
This means that $X$, $Y$ and $W$ span an even contact structure $\EE$ with $W$ tangent to the characteristic foliation 
$\WW$ of $\EE$. The quotient $\EE/\WW$ is spanned by the images of $X$ and $Y$, and the flow of $W$ is 
hyperbolic on this quotient. Therefore, by Theorem~\ref{t:main}, the distributions $\DD_{\pm}$ spanned by $W$ and 
$X\pm Y$ form a bi-Engel structure. Of course our theorem is not needed in this case, as one can check explicitly that the $\DD_{\pm}$
are Engel structures subordinate to $\EE$ whose intersection is obviously $\WW$, and which induce opposite orientations
on $\EE$. This was done in~\cite{thurston}.

All these structures on $\Sol$ are left-invariant, and therefore descend to closed four-manifolds obtained as quotients by 
lattices. Examples of such quotients are certain mapping tori of $\Nil^3$-manifolds, with the monodromy preserving the 
contact structure induced by $\xi$ on the fibers of the mapping torus.

\subsection{Suspensions of contact-Anosov flows}\label{ss:contactAnosov}

We now want to discuss a large class of bi-Engel structures obtained by suspending contact-Anosov flows. As in the previous example, the 
manifolds we obtain in this way are mapping tori, but the fibers will be very different. 

We  begin with a more general setup in arbitrary dimensions.
Suppose that $\EE$ is an even contact structure with volume-preserving characteristic foliation $\WW$, cf.~Lemma~\ref{l:volume}. We choose a defining form $\alpha$ with $d\alpha$ of constant rank $2n-2$. Any flow tangent to $\WW$ preserves the form $\alpha$, and therefore preserves the symplectic structure\footnote{Here the symplectic structure itself is invariant, not just its conformal class.} defined by $d\alpha$ on $\EE/\WW$. Now assume that the flow of a spanning vector field $W$ of $\WW$ is not just weakly hyperbolic in the sense of Definition~\ref{d:hyper}, but satisfies the following genuine hyperbolicity condition: there exist a continuous metric and a positive constant $b$, such that for the flow $\varphi_t$ of $W$ we have
$$
\vert\vert D\varphi_{t}(v_-)\vert\vert\leq K^{-1}e^{-bt}\vert\vert 
v_-\vert\vert \ \ \ \ \ \forall v_-\in \EE_-,
$$
$$
\vert\vert D\varphi_{t}(v_+)\vert\vert\geq Ke^{bt}\vert\vert 
v_+\vert\vert \ \ \ \ \ \forall v_+\in \EE_+,
$$
for all $t>0$.

\begin{lemma}
In this situation $\EE_{\pm}$ are both of dimension $n-1$, and are Lagrangian for the symplectic structure defined by $d\alpha$ on $\EE_+\oplus\EE_-$.
\end{lemma}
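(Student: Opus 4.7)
The plan is to show that the genuine hyperbolicity estimates, combined with the $\varphi_t$-invariance of $d\alpha$, force each of $\EE_\pm$ to be isotropic for the induced symplectic form on $\EE/\WW$; the Lagrangian property and the precise dimension then follow from a dimension count.

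First I would record that because $W$ is tangent to $\WW \subset \ker d\alpha$, the computation $L_W\alpha = i_W d\alpha = 0$ (the remark immediately following the proof of Lemma~\ref{l:volume}) shows $\varphi_t^*\alpha = \alpha$ for all $t$. Since $d\alpha|_\EE$ has constant rank $2n-2$ with kernel exactly $\WW$, it descends to a symplectic form on the $(2n-2)$-dimensional quotient $\EE/\WW$, and $\varphi_t$ acts on this quotient by linear symplectomorphisms.

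Next, for a fixed point $p\in M$ and vectors $v,v' \in \EE_-(p)$, invariance gives
\[
d\alpha(v,v') \;=\; d\alpha\bigl(D\varphi_t v,\, D\varphi_t v'\bigr)
\]
for every $t$. Since $M$ is closed and the metric on $\EE/\WW$ is continuous, $d\alpha$ is bounded by some constant $C$ times the product of norms, so the contraction estimate $\|D\varphi_t v_-\| \le K^{-1} e^{-bt}\|v_-\|$ yields
\[
|d\alpha(v,v')| \;\le\; CK^{-2} e^{-2bt}\|v\|\,\|v'\|,
\]
which tends to $0$ as $t\to\infty$. Hence $d\alpha(v,v')=0$ and $\EE_-(p)$ is isotropic. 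The same argument applied to the flow $\varphi_{-t}$, under which $\EE_+$ contracts with rate $e^{-bt}$ for $t>0$ (this is just the expansion estimate on $\EE_+$ rewritten), shows $\EE_+(p)$ is isotropic as well.

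Finally, any isotropic subspace of a symplectic vector space of dimension $2n-2$ has dimension at most $n-1$, yet $\dim \EE_+(p) + \dim \EE_-(p) = 2n-2$; hence each $\EE_\pm(p)$ has dimension exactly $n-1$ and is Lagrangian. I do not anticipate a serious obstacle: the whole argument is contained in the one-line estimate above, and the only subtlety worth stating explicitly is the uniform boundedness of $d\alpha$ with respect to the continuous metric, which is automatic from compactness of $M$.
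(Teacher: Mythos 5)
Your proof is correct and follows essentially the same route as the paper: invariance of $d\alpha$ under $\varphi_t$ plus the exponential contraction on $\EE_-$ (and on $\EE_+$ under the reversed flow) forces both subbundles to be isotropic, and the dimension count on the complementary splitting gives the Lagrangian conclusion. The one point you flag as a subtlety, the uniform bound on $d\alpha$, is handled identically in the paper via compactness and the auxiliary metric.
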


\begin{proof}
Suppose $v,w\in\EE_-$. Then, using $L_{W}\alpha=0$, we find
$$
d\alpha(v,w)=(\varphi_{t}^{*}d\alpha)(v,w)=
d\alpha(D\varphi_{t}(v),D\varphi_{t}(w)).
$$
Using the auxiliary metric $g$, we find that there is a constant $c$ 
such that
$$
\vert d\alpha(v,w)\vert\leq 
c\cdot\vert\vert d\alpha\vert\vert\cdot\vert\vert 
D\varphi_{t}(v)\vert\vert\cdot\vert\vert 
D\varphi_{t}(w)\vert\vert\leq c\cdot\vert\vert d\alpha\vert\vert\cdot
K^{-2}e^{-2bt}\cdot\vert\vert v\vert\vert\cdot\vert\vert w\vert\vert.
$$
Letting $t$ go to infinity, the right-hand-side becomes arbitrarily 
small. Therefore $d\alpha(v,w)=0$, and $\EE_-$ is isotropic for 
$d\alpha$. 
By the analogous argument, letting $t$ go to $-\infty$, we conclude that 
$\EE_+$ is also isotropic. As the two distributions are 
complementary, they must be equidimensional and Lagrangian.
\end{proof}

\begin{example}
Let $N$ be a manifold of dimension $2n-1$, with a contact Anosov vector field $X$. This means that we have a continuous invariant Anosov splitting $TN=\R X\oplus\EE^s\oplus\EE^u$ with the flow $\psi_t$ of $X$ being exponentially contracting on $\EE^s$ and exponentially expanding on $\EE^u$, and that the one-form $\alpha$ with kernel $\EE^s\oplus\EE^u$ and $\alpha (X)=1$ is a contact form. Then $\alpha$ is invariant under $\psi_t$, so that $\alpha$ descends to the mapping torus $M$ of the time one map $\psi_1$. The kernel of $\alpha$ on $M$ is an even contact structure $\EE$. Its characteristic foliation $\WW$ is spanned by the monodromy vector field $W$ of the fibration $M\longrightarrow S^1$. This integrates to a flow $\varphi_t$ on $M$, such that $\varphi_1$ restricted to a fiber coincides with $\psi_1$. Thus the characteristic foliation $\WW$ satisfies the strengthening of the weak hyperbolicity condition described above. 
\end{example}

As the monodromy $\psi_1$ is isotopic to the  identity, the mapping tori $M$ in the example are diffeomorphic to $N\times S^1$. For any $N$ supporting a contact Anosov flow, we obtain an even contact structure on $N\times S^1$ whose characteristic foliation is weakly hyperbolic (and much more).
By the work of Foulon and Hasselblatt~\cite{FH} it is now known that there are very many closed three-manifolds $N$ admitting contact 
Anosov flows. For any such $N$ the product $N\times S^1$ has bi-Engel structures obtained by suspension. Note that by varying the 
time $t$ for which one suspends, one obtains even contact structures on $N\times S^1$ with varying dynamics, e.g. closed orbits of $\WW$
appear and disappear with varying $t$.

\subsection{Rigidity of curves tangent to $\WW$}\label{ss:rigid}

That the Engel planes of a bi-Engel structure never make full turns around $\WW$ leads to a global rigidity property for their 
integral curves tangent to $\WW$. Consider two points $p$ and $q$ in $M$, and let $\Omega_{\DD}(p,q)$ be the space of 
piecewise $C^1$ paths from $p$ to $q$, which are tangent to an Engel structure $\DD$, equipped with the $C^1$ topology. 
As $\DD$ is bracket-generating, the Chow--Rashevskii theorem implies that $\Omega_{\DD}(p,q)$ is non-empty for any pair 
of points. A path in $\Omega_{\DD}(p,q)$  is called rigid, if it has a neighbourhood in $\Omega_{\DD}(p,q)$ such that every 
element of this neighbourhood is a reparametrisation of  the original path. Bryant and Hsu~\cite{BH} proved that a path 
tangent to an Engel structure is rigid if and  only if it is tangent to the characteristic foliation $\WW$, and has the property that 
along the path the Engel plane does not make (more than) a full turn around $\WW$. As a corollary we have:

\begin{proposition}
If an Engel structure $\DD$ is part of a bi-Engel structure, then any path tangent to the characteristic foliation $\WW\subset\DD$ of the induced even contact structure is rigid.
\end{proposition}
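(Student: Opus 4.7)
The plan is to apply the Bryant--Hsu criterion quoted immediately before the proposition: a path tangent to $\DD=\DD_+$ is rigid if and only if it is tangent to the characteristic foliation $\WW\subset\DD$ and the Engel plane does not make (more than) a full turn around $\WW$ along the path. The first condition is precisely the hypothesis on the path, so it suffices to verify the no-full-turn condition for $\DD_+$; the argument for the other plane field $\DD_-$ of the bi-Engel structure is symmetric.

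First I would set up a projective picture of the rotation along a flow line. Let $W$ be a nowhere-zero section of $\WW$ with flow $\varphi_t$. Any path tangent to $\WW$ lies in a single leaf and, after reparametrisation and possibly reversal of orientation, is a segment of a flow line of $W$. Pulling $\DD_\pm$ back to the fibre over a starting point $p$ yields two continuous curves
\[
d_\pm(t) = \bigl[D\varphi_{-t}\bigl(\DD_\pm(\varphi_t(p))\bigr)\bigr] \in \PP\bigl(\EE(p)/\WW(p)\bigr) \cong \R\PP^1,
\]
and $\DD_+$ makes a full turn around $\WW$ along the segment $t\in[0,T]$ precisely when $d_+$ sweeps out all of $\R\PP^1$ on that interval.

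The main step is to rule this out using the two defining features of a bi-Engel structure, exactly as in the geometric discussion following Definition~\ref{d:biEngel} and in the proof of Theorem~\ref{t:main}. Since $\DD_+$ and $\DD_-$ are Engel structures inducing opposite orientations on $\EE$, the curves $d_+$ and $d_-$ are strictly monotone in $\R\PP^1$ and rotate in opposite directions as $t$ increases. Since $\DD_+(q)\cap\DD_-(q)=\WW(q)$ at every point $q$, we have $d_+(t)\neq d_-(t)$ for all $t$. But two points moving strictly monotonically in opposite senses on the circle $\R\PP^1$, starting from distinct positions $d_+(0)\neq d_-(0)$, cannot remain disjoint once one of them traces out a full loop: a full turn by $d_+$ would, by an intermediate-value argument applied to the signed angular separation of the two monotone curves, force $d_+$ to overtake $d_-$ at some intermediate time, contradicting the transversality $d_+(t)\neq d_-(t)$. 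This rules out a full turn, and Bryant--Hsu then delivers rigidity.

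The main obstacle is bookkeeping rather than substance: making precise what ``full turn'' means in $\R\PP^1$ (where a complete rotation of a line in a plane corresponds to angle $\pi$), choosing continuous angle lifts $\tilde d_\pm\colon [0,T]\to\R$ along the universal cover $\R\to\R\PP^1$, and checking that the strictly increasing $\tilde d_+$ and strictly decreasing $\tilde d_-$ must coincide modulo $\pi$ as soon as $\tilde d_+(T)-\tilde d_+(0)\geq\pi$. Once the opposite-rotation and transversality inputs that are already part of the bi-Engel framework are in hand, the no-full-turn conclusion reduces to this short monotonicity argument, and rigidity follows by quoting Bryant--Hsu.
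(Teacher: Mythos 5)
Your proposal is correct and takes essentially the same route as the paper: the proposition is deduced there directly from the Bryant--Hsu criterion together with the observation (made after Definition~\ref{d:biEngel}) that the two Engel planes of a bi-Engel structure rotate around $\WW$ in opposite directions and never coincide, so neither can complete a full turn. Your angle-lift and intermediate-value argument simply makes that observation explicit.
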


The absence of full turns of the bi-Engel planes around $\WW$ is in marked contrast with the properties of the Engel structures constructed by
Casals, P\'erez, del Pino and Presas~\cite{CP3}. Their construction crucially relies on the presence of several full turns along certain
orbits, and therefore never produces this kind of structure. The original existence proof of the second author~\cite{annals} can always
be made to have some leaves of $\WW$ with full turns, but, unless one adds these by hand, it may also produce Engel structures 
without full turns.

There are very few explicit examples of Engel structures known not to have full turns which do not come from bi-Engel structures. 
In~\cite{thurston} Engel structures without full turns were found not only on the Thurston geometry $\Sol$, which is bi-Engel, but 
also on some other solvable geometries and on $\Nil^4$, which are not bi-Engel.

\bigskip

\bibliographystyle{amsplain}

\bigskip

\end{document}